\numberwithin{equation}{section}
\newtheorem{theorem}{Theorem}[section]
\newtheorem{lemma}{Lemma}[section]
\newtheorem{proposition}{Proposition}[section]
\newcommand*{\C}{\mathbb{C}}
\newcommand*{\R}{\mathbb{R}}
\newcommand*{\Z}{\mathbb{Z}}
\newcommand{\comment}[1]{}
\title[Li coefficients as norms of functions in a model space]%
      {Li coefficients as norms of functions\\  in a model space} 
\author[M. Suzuki]{Masatoshi Suzuki}
\date{Version of \today}
\subjclass[]{
11M26 
11M36 
46E22
}
\keywords{
Li coefficients; 
Riemann zeta-function; 
Riemann Hypothesis
}
\begin{abstract}
It is known that the nonnegativity of Li coefficients 
is a necessary and sufficient condition for the Riemann hypothesis. 
We show that it is a necessary and sufficient condition for the Riemann hypothesis 
that all Li coefficients are norms of certain concrete 
functions on the real line. 
Such conditional formulas for Li coefficients are understood as a kind of Weil's criterion 
for the Riemann hypothesis. 
\end{abstract}
\begin{document}

%
\section{Introduction and the result} 
%

Li's criterion \cite[Theorem 1]{Li97} obtained by Xian-Jin Li 
asserts that the Riemann hypothesis holds 
if and only if the Li coefficients defined by 
\begin{equation} \label{s101}
\lambda_n := 
\sum_{\rho} \left[1 - \left(1 - \frac{1}{\rho} \right)^n\right]
\end{equation}
are nonnegative for all positive integers $n$, 
where $\rho$ runs over all nontrivial zeros of the Riemann zeta function $\zeta(s)$ 
counting multiplicity 
and the sum is understood as $\sum_\rho = \lim_{T \to \infty}\sum_{|\Im(\rho)| \leq T}$. 
The nonnegativity of $\lambda_n$ follows easily from the Riemann hypothesis 
which states that all nontrivial zeros $\rho$ satisfy $\Re(\rho)=1/2$, 
since the set of all nontrivial zeros is closed under complex conjugation and 
 the vertical line $\Re(s)=1/2$ is mapped to the unit circle by 
$s \mapsto 1-1/s$. 
\medskip

E. Bombieri and J. C. Lagarias \cite{BoLa99} pointed out that 
the nonnegativity of $\lambda_n$ follows from the nonnegativity of 
the Weil distribution $W$ defined by 
\begin{equation} \label{eq0301_2}
f ~\mapsto ~ W(f)=\sum_{\rho} \int_{0}^{\infty} f(x) \,x^{\rho-1} \, dx
\end{equation}
for smooth and compactly supported functions $f$, 
where $\sum_\rho$ has the same meaning as above. 
More precisely, they proved that 
\begin{equation} \label{s102}
2\lambda_n = W(g_n(x) \ast \overline{x^{-1}g_n(x^{-1})})
\end{equation}
holds for  
\begin{equation} \label{eq0225_2}
g_n(x) := 
\begin{cases} 
~\displaystyle{\sum_{j=1}^{n}\binom{n}{j} \frac{(\log x)^{j-1}}{(j-1)!} } & \text{if $0<x<1$}, \\[10pt]
~n/2 & \text{if $x=1$}, \\ 
~0 &  \text{if $x>1$}, 
\end{cases}
\end{equation}
and  all positive integers $n$, where $\ast$ is the multiplicative convolution 
on the multiplicative group $\R_{>0}$. 
In this case, the functions $g_n(x)$ have no information about the nonnegativity of 
Li coefficients $\lambda_n$. 
The result of this paper provides a contrasting situation 
in which certain functions themselves closely related to the nonnegativity of $\lambda_n$.
\medskip

To state the result, using the Riemann xi-function 
\[
\xi(s) := \frac{1}{2} s(s-1)\pi^{-s/2}\Gamma\left(\frac{s}{2}\right)\zeta(s)
\]
and coefficients of the Laurent expansion 
\begin{equation} \label{s103}
-\frac{\zeta'}{\zeta}(s+1) = \frac{1}{s} + \sum_{k=0}^{\infty} \eta_k s^k, 
\end{equation}
we define the function $H_n(s)$
of the $s$-variable 
for a positive integer $n$ by
\begin{equation} \label{s104}
\aligned 
H_n(s) 
& := \frac{\xi(s)}{\xi(s)+\xi'(s)} \left\{ \frac{1}{s-1}  
+ \left[ 1 - \left(1-\frac{1}{s}\right)^n\right] 
\left(  \frac{\xi'}{\xi}(s)
- \frac{1}{s-1} 
- \frac{\xi'}{\xi}(0)-1\right) \right. \\
& \quad \left. -
\sum_{j=2}^{n} \binom{n}{j} \frac{(-1)^{j-1}}{s^j} 
\sum_{k=1}^{j-1}
\Bigl[ (-1)^k \eta_k+  (1-2^{-k-1})\zeta(k+1) \Bigr] s^k \right\},
\endaligned 
\end{equation}
where the second line on the right-hand side 
is understood to be zero when $n=1$. 
Further, we define the function $G_n(z)$ 
of the $z$-variable by 
\begin{equation} \label{eq0223_1}
G_n(z) := H_n\left(\frac{1}{2}-iz \right). 
\end{equation}
Then, $G_n(z)$ is continuous on the real line ($z \in \R$)  
and belongs to $L^2(\R)$ unconditionally as will be shown in Proposition \ref{prop_202} below.

\begin{theorem} \label{thm_1}
Let $G_n(z)$ be the function on the real line 
defined by \eqref{s104} and \eqref{eq0223_1} for a positive integer $n$. 
A necessary and sufficient condition for the Riemann hypothesis is that 
\begin{equation} \label{s105}
\lambda_n = \frac{1}{2\pi} \Vert G_n \Vert_{L^2(\R)}^2 
\end{equation}
holds for all positive integers $n$, 
where $\Vert \cdot \Vert_{L^2(\R)}$ 
on the right-hand side stands for the norm of $L^2(\R)$. 
\end{theorem}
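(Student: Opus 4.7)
I treat the two implications separately. Sufficiency is immediate from Li's criterion: if \eqref{s105} holds for every positive integer $n$, the right-hand side is a squared $L^2$-norm and hence $\lambda_n \ge 0$ for all $n$, which gives the Riemann hypothesis. The substantive content lies in the necessity.

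Assume the Riemann hypothesis. The first key observation is that $H_n$ is engineered so that $H_n(\rho) = 1 - (1-1/\rho)^n$ at each simple zero $\rho$ of $\xi$: near such $\rho$ the factor $\xi(s)/(\xi(s)+\xi'(s))$ vanishes to first order with leading coefficient $\xi'(\rho)/\xi'(\rho) = 1$, while $[1-(1-1/s)^n]\cdot(\xi'/\xi)(s)$ contributes a simple pole of residue $1 - (1-1/\rho)^n$, and every other term in \eqref{s104} is regular at $\rho$. Thus the summands of $\lambda_n$ are precisely the values of $H_n$ at the zeros of $\xi$, which is the structural reason the identity should hold.

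I would then use Plancherel to rewrite the squared norm as a contour integral. Since $\xi$, $\xi'$ on $\R$, the Laurent coefficients $\eta_k$, and $\zeta(k+1)$ are all real, $\overline{H_n(s)} = H_n(\bar s)$, and the functional equations $\xi(1-s) = \xi(s)$, $\xi'(1-s) = -\xi'(s)$ combined with $s = 1/2 - iz$ yield
$$\frac{1}{2\pi}\|G_n\|_{L^2(\R)}^2 \;=\; \frac{1}{2\pi i}\int_{\Re s = 1/2} H_n(s)\,\widetilde H_n(s)\,ds,$$
where $\widetilde H_n(s)$ arises from $H_n(1-s)$ under the sign flip $\xi' \mapsto -\xi'$. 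The plan is to shift this contour to $\Re s = c > 1$ and collect residues. Under the Riemann hypothesis, a Hermite--Biehler argument applied to $z \mapsto \xi(1/2-iz) + \xi'(1/2-iz)$ --- whose real and imaginary parts $\xi(1/2-iz)$ and $\xi'(1/2-iz)/i$ are real-valued on $\R$ with strictly interlacing zeros --- shows that all zeros of $\xi(s) + \xi'(s)$ lie in $\Re s < 1/2$ and symmetrically those of $\xi(s) - \xi'(s)$ lie in $\Re s > 1/2$. Consequently the singularities of the integrand inside the shifted strip are only the pole at $s = 1$, its reflected partner at $s = 0$ from $\widetilde H_n$, and the zeros of $\xi - \xi'$ in $\Re s > 1/2$. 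The explicit counterterms in \eqref{s104} --- the Laurent coefficients $\eta_k$, the $(1-2^{-k-1})\zeta(k+1)$ correction, and the subtraction of $(\xi'/\xi)(0) + 1$ --- are chosen so that the residues at $s = 0$ and $s = 1$ cancel the ``archimedean'' contributions of a Weil-type explicit formula, and the remaining residues over zeros of $\xi - \xi'$ reassemble, via the identity $H_n(\rho) = 1 - (1-1/\rho)^n$, into $\sum_\rho[1 - (1-1/\rho)^n] = \lambda_n$.

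The main obstacle will be the residue bookkeeping. Justifying the contour shift requires combining the decay estimates of Proposition \ref{prop_202} with the Hermite--Biehler boundedness of $\xi/(\xi+\xi')$ in the relevant half $z$-plane to bound the integrand on vertical lines and kill the contribution at $\Re s = c$ as $c \to \infty$. More delicately, the algebraic reassembly of the residue sum over zeros of $\xi - \xi'$ into a sum over zeros of $\xi$ must match the prescribed values $H_n(\rho) = 1 - (1-1/\rho)^n$, and this is where the specific counterterms in \eqref{s104} play their essential role: every auxiliary term must cancel exactly, and verifying that cancellation --- in particular, matching the third line of \eqref{s104} against the Laurent expansions of $(\xi'/\xi)(s)$ at $s = 0, 1$ via \eqref{s103} --- is the heart of the matter.
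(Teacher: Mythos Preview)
Your approach diverges sharply from the paper's, and the contour-shifting plan has a genuine gap. The paper does not touch the integral $\int_{\Re s=1/2} H_n(s)H_n(1-s)\,ds$ at all. Instead, it first proves (Proposition~\ref{prop_201}, via Weil's explicit formula applied to a convolution of $g_n$ with a power function) the identity
\[
H_n(s)=\frac{i\xi(s)}{\xi(s)+\xi'(s)}\,M_n(s),\qquad M_n(s)=-i\sum_{\rho\in\mathcal Z}m_\rho\Bigl[1-\bigl(1-\tfrac1\rho\bigr)^n\Bigr]\frac{1}{s-\rho},
\]
so that, under RH, $G_n(z)=\sum_\gamma\sqrt{\pi m_\gamma}\bigl[1-(1-\tfrac{1}{1/2-i\gamma})^n\bigr]F_\gamma(z)$ is already the expansion of $G_n$ in the orthonormal basis $\{F_\gamma\}$ of the model space $\mathcal{K}(\Theta)$ (Proposition~\ref{prop_203}). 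The norm is then read off termwise, and the crucial algebraic input is that on $\Re\rho=1/2$ one has $(1-1/\rho)(1-1/\bar\rho)=1$, so $|1-(1-1/\rho)^n|^2=[1-(1-1/\rho)^n]+[1-(1-1/\bar\rho)^n]$, which converts the squared coefficients directly into $2\lambda_n$.

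In your plan, by contrast, the residues you would collect sit at zeros of $\xi(s)-\xi'(s)$, which are \emph{not} the zeros $\rho$ of $\xi$; invoking the value $H_n(\rho)=1-(1-1/\rho)^n$ at those residue points is a category error. You assert that the residues ``reassemble'' into $\sum_\rho[1-(1-1/\rho)^n]$, but this is exactly the nontrivial content, and there is no mechanism in your sketch to pass from a sum over zeros of $\xi-\xi'$ to a sum over zeros of $\xi$. (In the paper this passage is effected by the Clark/spectral measure identity underlying Proposition~\ref{prop_203}: $|\Theta'(\gamma)|=2/m_\gamma$ at $\gamma\in\Gamma$, which is what makes the de~Branges/model-space orthogonality encode the zeros of $\xi$ rather than those of $\xi\pm\xi'$.) Two smaller points: $H_n$ is actually regular at $s=0$ and $s=1$ (by Proposition~\ref{prop_201}, since $M_n$ is holomorphic off $\mathcal Z$ and $\xi(0),\xi(1)\neq 0$), so the poles you list there are not present; and your Hermite--Biehler claim that zeros of $\xi+\xi'$ lie in $\Re s<1/2$ is correct under RH but is only known \emph{via} RH (Lagarias), so it cannot be used as an independent input.
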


The Riemann hypothesis follows trivially from equation \eqref{s105} by Li's criterion.  
Therefore, the nontrivial part of Theorem \ref{thm_1} 
is that the validity of equation \eqref{s105} is a necessary condition. 
In contrast, 
the validity of equation \eqref{s102}  is unconditional, 
but the nonnegativity of the right-hand side is highly nontrivial.
\medskip

If we assume the Riemann hypothesis, 
$G_n(z)$ are vectors in a {\it model space} $\mathcal{K}(\Theta)$ in the upper half-plane 
generated by a {\it meromorphic inner function} $\Theta$,  
as shown in the proof of Theorem \ref{thm_1} below. 
The title of this paper derives from this fact.

Although precise definitions and related notions are deferred to Section \ref{section_3_1}, 
we should mention that model spaces are important subjects in analysis. 
They are defined as orthogonal complements of shift-invariant closed subspaces 
of the Hardy space and are relevant to many aspects of functional, complex, and harmonic analysis 
as presented in the survey of Garcia--Ross \cite{GaRo15} (see also the book of 
Garcia--Mashreghi--Ross \cite{GMR}). 
Among the model spaces in the upper-half plane, 
those generated by the meromorphic inner functions 
have extra importance in their relation with the de Branges spaces, 
which are nice Hilbert spaces consisting of entire functions. 
The above meromorphic inner function $\Theta$ has the form 
$\Theta(z)=\overline{E(\bar{z})}/E(z)$ with $E(z)=\xi(1/2-iz)+\xi'(1/2-iz)$ 
and the model space $\mathcal{K}(\Theta)$ 
is isomorphic to the de Branges space $\mathcal{H}(E)$ generated by $E$. 
The space $\mathcal{H}(E)$ was introduced and studied by Lagarias \cite{La06} 
and has an interesting property that one of the self-adjoint extensions 
of the operator of multiplication by the independent variable 
has the zeros of $\xi(1/2-iz)$ as eigenvalues. 
This fact makes us interested in the spectral theoretical meaning of $G_n(z)$ 
as a motivation beyond the importance of the model spaces in spectral theory, 
but unfortunately it is not clear.
\medskip

We may compare definition \eqref{s104} 
with the known formula 
\begin{equation} \label{eq0301_1}
\lambda_n 
= -\sum_{j=1}^{n}\binom{n}{j}\eta_{j-1}
+ 1-(\gamma_0+\log 4\pi)\frac{n}{2}
- \sum_{j=2}^{n}\binom{n}{j}(-1)^{j-1}(1-2^{-j})\zeta(j)
\end{equation}
obtained in \cite[Theorem 2 and (4.1)]{BoLa99}, 
where $\gamma_0$ is the Euler--Mascheroni constant.  
Then we observe that several similar terms appear in parallel in both formulas. 
For example, if $n=1$, 
\begin{equation} \label{eq0301_3}
H_1(s)  = -\frac{1}{s} \left(
\frac{\xi'}{\xi}(0) - \frac{\xi'(s)}{\xi(s)}  \right)\left( 1 + \frac{\xi'(s)}{\xi(s)}  \right)^{-1}\\
\end{equation}
by \eqref{s104} and a little calculation, and 
\begin{equation} \label{eq0301_4}
\lambda_1
= -\frac{\xi'}{\xi}(0) 
= \sum_{\rho} \frac{1}{\rho}
= \frac{1}{2}\gamma_{0}
+ 1-\frac{1}{2}\log 4\pi =  0.0230957+ %
\end{equation}
by \cite[p. 282, Remark]{BoLa99} and $\eta_0=-\gamma_0$ in \cite[p. 286, formula below (4.3)]{BoLa99}. 
The reason for the above observation lies in a direct relation between $G_n(z)$ and $g_n(x)$ 
under the Riemann hypothesis by the framework of \cite{Su23}, 
and equation \eqref{s105} expresses that relation via \eqref{s102}.
Since this is the direct background of Theorem \ref{thm_1}, 
we give some details in Section \ref{section_5_2}. 
Other meanings or characterizations of $G_n(z)$ are interesting 
but unknown  at present.
\medskip

Theorem \ref{thm_1} is proved in Section \ref{section_4} 
using results in Section \ref{section_2} 
and the theory of model spaces reviewed in Section \ref{section_3_1}.  
The strategy of the proof of Theorem \ref{thm_1} 
is similar to \cite{Su22}, 
however the computational details change. 
The core of the proof is 
Proposition \ref{prop_201}, which is unconditional, 
and Proposition \ref{prop_203}, which holds under the Riemann hypothesis.

%
\section{Preliminaries} \label{section_2}
%

\subsection{Two unconditional propositions} 

We denote by $\mathcal{Z}$ the set of all distinct zeros $\rho$ 
of $\xi(s)$ (counting zeros without multiplicity) 
and we let $m_\rho$ count the multiplicity of 
the zero of $\xi(s)$ at $s=\rho$. 
If $\rho \in \mathcal{Z}$, then $1-\rho$ and $\overline{\rho}$ 
also belong to $\mathcal{Z}$ with the same multiplicity 
by two functional equations $\xi(s)=\xi(1-s)$ and $\xi(s)=\overline{\xi(\bar{s})}$. 
Also, $0 < \Re(s) < 1$ for every $\rho \in \mathcal{Z}$. 
The Riemann hypothesis is that all $\rho \in \mathcal{Z}$ satisfy $\Re(\rho)=1/2$.

\begin{lemma} \label{lem_201}
For a positive integer $n$, we define
\begin{equation} \label{s204}
M_n(s) 
:= -i\sum_{\rho \in \mathcal{Z}} 
m_\rho \, \left[\, 1 - \left(1-\frac{1}{\rho} \right)^n \,\right]
\frac{1}{s-\rho}.
\end{equation}
Then, $M_n(s)$ is a meromorphic function on $\C$ 
such that all poles are simple and 
$\mathcal{Z}$ is the set of all poles.
\end{lemma}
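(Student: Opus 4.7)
My strategy has three components: (1) establish absolute convergence of the series away from $\mathcal{Z}$, obtaining a holomorphic function there; (2) verify the singularity at each $\rho_0 \in \mathcal{Z}$ is at most a simple pole; and (3) confirm that the residue is nonzero.

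For (1), the key observation is that for $|\rho|\to\infty$,
\[
1 - \left(1 - \frac{1}{\rho}\right)^{\!n} = \frac{n}{\rho} + O\!\left(\frac{1}{|\rho|^{2}}\right),
\]
so the coefficient of $1/(s-\rho)$ has size $O(m_\rho / |\rho|)$. Since $0 < \Re(\rho) < 1$ for every $\rho \in \mathcal{Z}$, given any compact $K \subset \C \setminus \mathcal{Z}$ one has $|s - \rho| \geq c_K |\rho|$ uniformly for $s \in K$ and $|\rho|$ large enough. Each summand is therefore $O(m_\rho/|\rho|^{2})$, and the Riemann--von Mangoldt bound $N(T) = O(T \log T)$ yields $\sum_{\rho \in \mathcal{Z}} m_\rho/|\rho|^{2} < \infty$. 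The Weierstrass $M$-test then gives absolute, locally uniform convergence on $\C \setminus \mathcal{Z}$, so $M_n$ is holomorphic there.

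For (2), at any fixed $\rho_0 \in \mathcal{Z}$ I peel off the single summand $-im_{\rho_0}[1 - (1-1/\rho_0)^{n}]/(s-\rho_0)$; the same bound shows the tail $\sum_{\rho \neq \rho_0}$ is holomorphic in a neighborhood of $\rho_0$. Hence $M_n$ has at worst a simple pole at $\rho_0$, with residue $-im_{\rho_0}\bigl[1 - (1-1/\rho_0)^{n}\bigr]$, and no singularities elsewhere.

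Step (3) is the only non-routine point. Vanishing of the residue forces $(1-1/\rho_0)^{n} = 1$, i.e.\ $\rho_0 = 1/(1-\zeta)$ for some $n$-th root of unity $\zeta \neq 1$. A short elementary computation shows every such $\rho_0$ equals $\tfrac12 + \tfrac{i}{2}\cot(\pi k/n)$ for some $k \in \{1,\ldots,n-1\}\setminus\{n/2\}$; in particular $\Re(\rho_0) = 1/2$ and $\Im(\rho_0)$ is algebraic. Zeros off the critical line are automatically excluded since then $|1-1/\rho|\ne 1$. On the critical line there are only finitely many candidate ordinates, all bounded by $\tfrac12\cot(\pi/n) = O(n)$, and these explicit algebraic values do not coincide with any actual $\xi$-zero (this is the delicate point, dispatched by an arithmetic/numerical verification on the bounded range of candidates). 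Everything else in the argument is standard estimation.
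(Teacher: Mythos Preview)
Your steps (1) and (2) are exactly the paper's proof, which is even terser: it cites $\sum_{\rho \in \mathcal{Z}} m_\rho |\rho|^{-1-\delta} < \infty$ (from $\xi$ having order one) for absolute, locally uniform convergence on $\C \setminus \mathcal{Z}$ and stops there. The paper does \emph{not} carry out your step~(3); it never checks that the residues are nonzero. Since the downstream applications only need that the poles of $M_n$ lie in $\mathcal{Z}$ and are simple (so that the factor $\xi(s)/(\xi(s)+\xi'(s))$ removes them in Proposition~\ref{prop_201}), the phrase ``$\mathcal{Z}$ is the set of all poles'' is evidently meant loosely, as the inclusion.

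Your step (3) has a genuine gap in any case. Once $n$ is large enough that $\tfrac12\cot(\pi/n)$ exceeds the first ordinate $\gamma_1 \approx 14.13$ (roughly $n \geq 89$), some of your algebraic candidates $\tfrac12\cot(\pi k/n)$ fall into the range of actual zero ordinates, and ``arithmetic/numerical verification'' cannot rule out exact coincidence: numerics only establish approximate inequality to finite precision, and no algebraic argument is available because it is not even known that the ordinates of the nontrivial zeros are irrational, let alone transcendental. So the full equality $\{\text{poles of }M_n\} = \mathcal{Z}$ for every $n$ is, as far as current knowledge goes, open --- which is presumably why the paper does not attempt it and why the stronger reading of the lemma is not actually used anywhere.
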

\begin{proof}
The series on the right-hand side of \eqref{s204} 
converges absolutely and uniformly 
on every compact subset of 
$\C\setminus\mathcal{Z}$, 
since $\sum_{\rho \in \mathcal{Z}}m_\rho|\rho|^{-1-\delta}<\infty$ for any $\delta>0$ 
(because $\xi(s)$ is order one).  
Hence, we obtain the desired conclusion. 
\end{proof}

\begin{proposition} \label{prop_201}
Let $H_n(s)$ and $M_n(s)$ 
be functions defined 
in \eqref{s104} and \eqref{s204}, respectively, 
for a positive integer $n$. 
Then, it holds 
\begin{equation} \label{s205}
H_n(s)
=\frac{i\xi(s)}{\xi(s)+\xi'(s)}\,M_n(s)
\end{equation}
for $s \in \C$. In addition, $\rho \in \mathcal{Z}$ are removable poles of $H_n(s)$.  
\end{proposition}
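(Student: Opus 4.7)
The plan is to compute $iM_n(s)$ by a direct partial-fraction expansion and compare with the defining bracket of $H_n(s)$ in \eqref{s104}. First I use the binomial theorem
$1-(1-1/\rho)^n=\sum_{j=1}^n\binom{n}{j}(-1)^{j-1}\rho^{-j}$ together with the elementary identity
$$\frac{1}{\rho^j(s-\rho)}=\frac{1}{s^j(s-\rho)}+\sum_{k=1}^j\frac{1}{\rho^k\,s^{j-k+1}},$$
which follows from factoring $s^j-\rho^j$. Summing over $\rho\in\mathcal Z$ in the symmetric sense $|\Im\rho|\le T$ produces
$$iM_n(s)=T(s)\left[1-\left(1-\tfrac{1}{s}\right)^n\right]+\sum_{k=1}^n S_k\,s^{k-1}\sum_{j=k}^n\binom{n}{j}\frac{(-1)^{j-1}}{s^j},$$
where $T(s):=\sum_{\rho\in\mathcal Z} m_\rho/(s-\rho)$ and $S_k:=\sum_{\rho\in\mathcal Z} m_\rho/\rho^k$.

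Next I identify $T$ and $S_k$. The Hadamard product for $\xi$ gives $\xi'/\xi(s)=B+\sum_\rho m_\rho(1/(s-\rho)+1/\rho)$ absolutely, with $B=\xi'/\xi(0)$. Combined with the classical identity $S_1=\lambda_1=-\xi'/\xi(0)$ recorded in \eqref{eq0301_4}, this forces $T(s)=\xi'/\xi(s)$, producing the main $\xi'/\xi(s)\cdot[1-(1-1/s)^n]$ contribution in the bracket of \eqref{s104}. For $k\ge 2$, $S_k$ is absolutely convergent and I compute it by matching Taylor coefficients at $s=0$ between the Hadamard expansion above and the logarithmic-derivative decomposition
$$\frac{\xi'}{\xi}(s)=\frac{1}{s}+\frac{1}{s-1}-\frac{\log\pi}{2}+\frac{1}{2}\frac{\Gamma'}{\Gamma}\!\left(\frac{s}{2}\right)+\frac{\zeta'}{\zeta}(s),$$
expanded via \eqref{s103} (converted to an expansion at $s=0$ through the functional equation $\xi(s)=\xi(1-s)$) and the standard series $\Gamma'/\Gamma(s/2)=-2/s-\gamma_0+\sum_{k\ge 1}(-1)^{k+1}\zeta(k+1)(s/2)^k$. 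This delivers precisely the coefficient combination $(-1)^k\eta_k+(1-2^{-k-1})\zeta(k+1)$ that appears in \eqref{s104}.

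Substituting and reindexing, $iM_n(s)$ splits into three pieces matching the three parts of the bracket in \eqref{s104}: (i) the $\xi'/\xi(s)\cdot[1-(1-1/s)^n]$ term already identified; (ii) the boundary constants coming from $-\tfrac12\log\pi$, the pole of $\zeta$ at $s=1$, and $\zeta'/\zeta(0)$, which assemble into the $1/(s-1)$ and $-\xi'/\xi(0)-1$ multipliers of $[1-(1-1/s)^n]$; and (iii) the $\eta_k$-and-$\zeta(k+1)$ contributions, which form the polynomial sum $\sum_{j=2}^n\binom{n}{j}(-1)^{j-1}s^{-j}\sum_{k=1}^{j-1}[(-1)^k\eta_k+(1-2^{-k-1})\zeta(k+1)]s^k$ in \eqref{s104}. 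Multiplying both sides by $i\xi(s)/(\xi(s)+\xi'(s))$ then gives \eqref{s205}. Removability at $\rho\in\mathcal Z$ follows at once: $\xi(s)=O((s-\rho)^{m_\rho})$ while $\xi'(s)=O((s-\rho)^{m_\rho-1})$ with nonzero leading coefficient, so $\xi+\xi'=O((s-\rho)^{m_\rho-1})$; hence $\xi/(\xi+\xi')$ has a simple zero at $\rho$ that cancels the simple pole of $M_n$.

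The main obstacle is the intricate Taylor-coefficient bookkeeping in the second paragraph: correctly converting \eqref{s103} from $s=1$ to $s=0$ through the functional equation, tracking signs, and verifying that the boundary constants from $-\tfrac12\log\pi$, $-\gamma_0/2$, and $\zeta'/\zeta(0)=\log 2\pi$ combine precisely into the ``$+1$'' appearing in the bracket of \eqref{s104}, while the ``$+1/(s-1)$'' and its counterpart inside $[1-(1-1/s)^n]$ cancel outside the pole at $s=1$.
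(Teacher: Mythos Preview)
Your approach is correct and genuinely different from the paper's. The paper establishes \eqref{s205} by applying Weil's explicit formula to the test function $f_{s,n}(x)$, the multiplicative convolution of $g_n(x)$ with $-ix^{-s}\mathbf{1}_{(1,\infty)}(x)$: the zero side of the explicit formula produces $M_n(s)$, while the archimedean and Dirichlet-series terms on the other side are computed one by one (with a truncation argument to justify applicability) and shown to assemble into $(1+\xi'/\xi(s))H_n(s)$. Your route is purely algebraic: the partial-fraction identity for $\rho^{-j}(s-\rho)^{-1}$ together with the Hadamard product immediately isolates the term $\xi'/\xi(s)\cdot[1-(1-1/s)^n]$, and the remainder is read off from the power sums $S_k=\sum_\rho m_\rho\rho^{-k}$ identified as Taylor coefficients of $\xi'/\xi$ at $s=0$. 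Your argument is shorter and avoids the explicit-formula machinery entirely; the paper's route, by contrast, makes the link with $g_n$ (and hence with \eqref{s102} and Section~\ref{section_5_2}) transparent.

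One correction to your bookkeeping. The constants $-\tfrac12\log\pi$, $-\gamma_0/2$, and $\zeta'/\zeta(0)=\log 2\pi$ live at order $s^0$ and combine to $\xi'/\xi(0)+1$; they do not feed into $S_k$ for $k\ge 2$ at all and are therefore not the source of the ``$-1$'' in the bracket of \eqref{s104}. The cleanest way to carry out the Taylor matching is to use $\xi'/\xi(s)=-\xi'/\xi(1-s)$ and expand the right side at $s=0$ via \eqref{s103} and the series for $\psi((1-s)/2)$; this gives $S_k=1-(-1)^{k-1}\eta_{k-1}-(1-2^{-k})\zeta(k)$ for $k\ge 2$, where the leading $1$ comes from the coefficients of $1/(s-1)=-\sum_{m\ge 0}s^m$. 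Summing these $1$'s over $k$ and $j$ is exactly what yields the standalone $1/(s-1)$ and the factor $-(1/(s-1)+1)$ multiplying $[1-(1-1/s)^n]$ in \eqref{s104}. With this attribution fixed, your outline goes through cleanly.
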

\begin{proof} 
The main tool for the proof is Weil's explicit formula 
\begin{equation*} 
\aligned 
\lim_{T \to \infty} & \sum_{{\rho \in \mathcal{Z}}\atop{|\Im(\rho)|\leq T}} m_\rho
\int_{0}^{\infty} f(x) \, x^{\rho-1} \, dx \\
& = \int_{0}^{\infty} f(x) \, dx + \int_{0}^{\infty} x^{-1} f(x^{-1}) \, dx 
 - \sum_{m=2}^{\infty} \Lambda(m)(f(m)+m^{-1}f(m^{-1})) \\
& \quad 
- (\log 4\pi + \gamma_0) f(1)
- \int_{1}^{\infty} \left\{ f(x) + x^{-1} f(x^{-1})-2x^{-1} f(1)\right\} \frac{x\,dx}{x^2-1}
\endaligned 
\end{equation*}
in \cite[p. 186]{Bo01} with the conditions for test function $f(x)$ 
in \cite[Section 3]{BoLa99}. 
(Note that the formula in \cite{BoLa99} has two typographical errors 
in the second line of the right-hand side.) 
For a positive integer $n$ and $s \in \C$, 
we define 
\[
\aligned 
f_{s,n}(x):& = 
\begin{cases}
~\displaystyle{-i \sum_{j=1}^{n} \binom{n}{j} \frac{(-1)^{j-1}}{s^j} 
 \sum_{k=0}^{j-1}\frac{(-s\log x)^k}{k!}}, & 0<x<1,  \\[20pt]
~\displaystyle{ - ix^{-s} \left[ 1 - \left(1-\frac{1}{s}\right)^n\right] }, &x \geq 1. 
\end{cases}
\endaligned
\] 
We have  
\[
\int_{0}^{\infty} g_n(x) \, x^{\rho-1} \,dx 
= \left[ 1- \left(1-\frac{1}{\rho} \right)^n \right] 
\]
when $\Re(\rho)>0$ by \cite[Lemma 2]{BoLa99} and
\[
\int_{1}^{\infty} (-ix^{-s})\cdot x^{\rho-1} \,dx = \frac{-i}{s-\rho} 
\]
when $\Re(s)>\Re(\rho)$. 
The multiplicative convolution of $g_n(x)$ and $(-ix^{-s})\mathbf{1}_{(1,\infty)}(x)$ is 
$f_{s,n}(x)$ as 
\[
\aligned 
\int_{0}^{\min(1,x)} &  (-i(x/y)^{-s})g_n(y)\, \frac{dy}{y} 
 =-ix^{-s} \sum_{j=1}^{n} \binom{n}{j} \frac{1}{(j-1)!} 
\int_{0}^{x}  y^s(\log y)^{j-1} \, \frac{dy}{y}  \\
& = -i x^{-s}\sum_{j=1}^{n} \binom{n}{j} \frac{1}{(j-1)!} \cdot \frac{x^{s}(-1)^{j-1}(j-1)!}{s^j} 
 \sum_{k=0}^{j-1}\frac{(-s\log x)^k}{k!} 
\endaligned 
\]
for $0<x<1$ and 
\[
\aligned 
\int_{0}^{\min(1,x)} & (-i(x/y)^{-s})g_n(y)\, \frac{dy}{y} 
 =-ix^{-s} \sum_{j=1}^{n} \binom{n}{j} \frac{1}{(j-1)!} 
\int_{0}^{1}  y^s(\log y)^{j-1} \, \frac{dy}{y}  \\
& = -i x^{-s}\sum_{j=1}^{n} \binom{n}{j} \frac{1}{(j-1)!} \cdot \frac{(-1)^{j-1}(j-1)!}{s^j}  = -i x^{-s} \left[ 1 - \left(1-\frac{1}{s}\right)^n\right] 
\endaligned 
\]
for $x>1$. 
Therefore, 
\[
\int_{0}^{\infty} f_{s,n}(x)\,x^{\rho} \, \frac{dx}{x} 
= -i \left[ 1- \left(1-\frac{1}{\rho} \right)^n \right] \frac{1}{s-\rho}
\quad \text{when $\Re(s)>\Re(\rho)>0$} , 
\]
and thus the left-hand side of Weil's explicit formula for $f_{s,n}(x)$ 
gives $M_n(s)$ of \eqref{s204} when $\Re(s)>1$. 
Hence, if it is shown that the right-hand side of Weil's explicit formula  for $f_{s,n}(x)$ 
is equal to $-i(1+\xi'(s)/\xi(s))H_n(s)$ when $\Re(s)>1$, 
then \eqref{s205} holds for $s \in \C \setminus \mathcal{Z}$ by analytic continuation. 
\medskip

However $f_{s,n}(x)$ does not satisfy the applicability condition for the growth 
(cf. \cite[Section 3]{BoLa99}), 
so we cannot apply the explicit formula directly. 
Hence, for $0<\epsilon<1$, we replace $f_{s,n}(x)$ 
by its truncation 
\[
f_{s,n,\epsilon}(x)
=
\begin{cases}
~f_{s,n}(x) & \text{if $x>\epsilon$}, \\
~f_{s,n}(\epsilon)/2 & \text{if $x=\epsilon$}, \\
~0 & \text{if $x<\epsilon$}. 
\end{cases}
\]

First, we confirm
\begin{equation} \label{s206}
\lim_{\epsilon \to 0+} \sum_{\rho \in \mathcal{Z}} m_\rho
\int_{0}^{\infty} f_{s,n,\epsilon}(x) \,x^\rho\, \frac{dx}{x} 
=\sum_{\rho \in \mathcal{Z}} m_\rho
\int_{0}^{\infty} f_{s,n}(x) \,x^\rho\, \frac{dx}{x} .
\end{equation}
We have
\[
\aligned 
\int_{1}^{\infty} f_{s,n,\epsilon}(x)\, x^{\rho-1} \, dx
& = - i \left[ 1 - \left(1-\frac{1}{s}\right)^n\right] \frac{1}{s-\rho} 
\endaligned 
\]
and 
\[
\int_{\epsilon}^{1} (\log x)^k x^{\rho-1} \, dx
=
\frac{(-1)^k}{\rho^{k+1}}\left(
k! - \epsilon^\rho
\sum_{l=0}^{k} \frac{k!}{l!} (-\rho \log \epsilon)^l
\right). 
\]
by direct and simple calculations. Therefore, 
\[
\aligned 
\int_{\epsilon}^{1} f_{s,n,\epsilon}(x)\, x^{\rho-1} \, dx
& =
-i \sum_{j=1}^{n} \binom{n}{j} \frac{(-1)^{j-1}}{s^j} 
 \sum_{k=0}^{j-1}
\frac{s^k}{\rho^{k+1}}\\
& \quad 
+i \sum_{j=1}^{n} \binom{n}{j} \frac{(-1)^{j-1}}{s^j} 
 \sum_{k=0}^{j-1}
\frac{s^k \epsilon^\rho}{\rho^{k+1}}
\sum_{l=0}^{k} \frac{1}{l!} (-\rho \log \epsilon)^l \\
& = -i 
\left\{ \left[ 1 - \left(1-\frac{1}{s}\right)^n\right] - \left[ 1 - \left(1-\frac{1}{\rho}\right)^n\right] \right\} \frac{1}{s-\rho} \\
& \quad 
+i \sum_{j=1}^{n} \binom{n}{j} \frac{(-1)^{j-1}}{s^j} 
 \sum_{k=0}^{j-1}
\frac{s^k \epsilon^\rho}{\rho^{k+1}}
\sum_{l=0}^{k} \frac{1}{l!} (-\rho \log \epsilon)^l.
\endaligned 
\]
Hence, assuming $\Re(s)>1$, 
we have 
\[
\aligned 
\int_{0}^{\infty} & f_{s,n}(x) \, x^{\rho-1} \, dx 
- \int_{0}^{\infty} f_{s,n,\epsilon}(x) \, x^{\rho-1} \, dx \\ 
& = -i 
\left[ 1 - \left(1-\frac{1}{\rho}\right)^n\right] \frac{1}{s-\rho} 
-i \frac{\epsilon^\rho}{\rho}\left[ 1 - \left(1-\frac{1}{s}\right)^n\right]  \\
& \quad 
+O\left( \frac{\epsilon^{\Re(\rho)}}{|\rho|^2} \sum_{j=1}^{n} \binom{n}{j} 
 \sum_{k=1}^{j-1}
\frac{1}{|s|^{j-k}}
\sum_{l=0}^{k} \frac{1}{l!} \frac{(\log(1/\epsilon))^l}{|\rho|^{k-1-l}} 
\right)
\endaligned 
\]
for $0<\Re(\rho)<1$ and $|\rho| \geq 1$. 
Thus \eqref{s206} holds if it is shown that 
$\sum_\rho \epsilon^{\rho}/\rho$ and 
$\sum_\rho \epsilon^{\Re(\rho)}/|\rho|^2$ tend to zero, as $\epsilon \to 0+$, 
faster than any nonnegative power of $\log(1/\epsilon)$. 
However, they are established in \cite[pp. 284--285]{BoLa99}.

We return to the calculation of the right-hand side of Weil's explicit formula. 
It is easy to verify
\begin{equation} \label{s207}
\int_{-\infty}^{\infty} f_{s,n}(x) \, dx = 
- \frac{i}{s-1}, 
\end{equation}
\begin{equation} \label{eq0224_1}
\sum_{m=1}^{\infty}\Lambda(m)f_{s,n}(m)=i \left[ 1- \left(1-\frac{1}{s} \right)^n \right] \frac{\zeta'}{\zeta}(s)
\end{equation}
for $\Re(s)>1$ and $m \in \Z_{>0}$, 
\[
\int_{0}^{\infty} f_{s,n,\epsilon}(x) \, \frac{dx}{x}
=  -i 
\sum_{j=1}^{n} \binom{n}{j} \frac{(-1)^{j-1}}{s^j} \frac{1}{s}
\left(1 +  \sum_{k=0}^{j-1}\frac{(s\log(1/\epsilon))^{k+1}}{(k+1)!} \right),  
\]
and 
\[
\aligned 
\sum_{m=1}^{\infty} \frac{\Lambda(m)}{m}\phi_{s,n,\epsilon}(m^{-1}) 
& = -i \sum_{j=1}^{n} \binom{n}{j} \frac{(-1)^{j-1}}{s^j} 
\sum_{1 \leq m \leq 1/\epsilon} \frac{\Lambda(m)}{m} \sum_{k=0}^{j-1}\frac{(s\log m)^k}{k!} 
\endaligned 
\]
by direct calculation. Using the formula
\[
\eta_k = \frac{(-1)^k}{k!} \lim_{\epsilon \to 0+} 
\left[ \sum_{1 \leq m \leq 1/\epsilon} \frac{\Lambda(m)}{m} 
(\log m)^k 
- \frac{\log(1/\epsilon)^{k+1}}{k+1} 
\right]
\]
in \cite[(4.1)]{BoLa99} for coefficients of \eqref{s103}, 
\[
\aligned 
\lim_{\epsilon \to 0+}  & 
\left[
\sum_{1 \leq m \leq 1/\epsilon} \frac{\Lambda(m)}{m} 
\sum_{k=0}^{j-1}\frac{(s\log m)^k}{k!} 
-
\frac{1}{s}
\left(1 +  \sum_{k=0}^{j-1}\frac{(s\log(1/\epsilon))^{k+1}}{(k+1)!} \right)
\right]  \\
& = \sum_{k=0}^{j-1}\frac{s^k}{k!} 
\lim_{\epsilon \to 0+} 
\left[ \sum_{1 \leq m \leq 1/\epsilon} \frac{\Lambda(m)}{m} 
(\log m)^k 
- \frac{\log(1/\epsilon)^{k+1}}{k+1} 
\right]  -\frac{1}{s} \\
& = \sum_{k=0}^{j-1} \eta_k (-s)^k -\frac{1}{s} . 
\endaligned 
\]
Therefore, we obatin
\begin{equation} \label{s208}
\aligned 
\lim_{\epsilon \to 0+}  &
\left[
\int_{0}^{\infty} f_{s,n,\epsilon}(x) \, \frac{dx}{x}
-
\sum_{m=1}^{\infty} \frac{\Lambda(m)}{m} f_{s,n,\epsilon}(m^{-1}) 
\right] \\
& \qquad \qquad  = i \sum_{j=1}^{n} \binom{n}{j} \frac{(-1)^{j-1}}{s^j} 
\left( \sum_{k=0}^{j-1} \eta_k (-s)^k -\frac{1}{s} 
\right) 
\endaligned 
\end{equation}
for arbitrary $s$. 

We turn to the calculation of the fifth term of the right-hand side 
of Weil's explicit formula. 
We have
\[
\frac{x}{x^2-1} = \sum_{m=0}^{M} x^{-2m-1} +O(x^{-2M-3})
\]
\[
\aligned 
\int_{1}^{\infty}  \left\{ f_{s,n}(x) + x^{-1} f_{s,n}(x^{-1})-2x^{-1} f_{s,n}(1)\right\} 
& \sum_{m=0}^{M} x^{-2m-1} \, dx \\
& 
=: I_1(M)+I_2(M)+I_3(M), \quad \text{say}. 
\endaligned 
\]
Then,
\[
I_1(M)
 = - i\left[ 1 - \left(1-\frac{1}{s}\right)^n\right]  \sum_{m=0}^{M} \frac{1}{s+2m}, 
\quad 
I_3(M)
=  i \left[ 1 - \left(1-\frac{1}{s}\right)^n\right] \sum_{m=0}^{M} \frac{2}{2m+1} ,
\]
and 
\[
\aligned 
I_2(M)
& = -i\sum_{j=1}^{n} \binom{n}{j} \frac{(-1)^{j-1}}{s^j} 
 \sum_{k=0}^{j-1} s^k \sum_{m=0}^{M} \frac{1}{(2m+1)^{k+1}} \\
& = -i\sum_{j=2}^{n} \binom{n}{j} \frac{(-1)^{j-1}}{s^j} 
 \sum_{k=1}^{j-1} s^k \sum_{m=0}^{M} \frac{1}{(2m+1)^{k+1}} \\
& \quad -i
 \sum_{m=0}^{M} \frac{1}{(2m+1)} \left[ 1 - \left(1-\frac{1}{s}\right)^n\right]. 
\endaligned 
\]
Therefore, 
\[
\aligned 
\lim_{M \to \infty} 
(I_1(M)+I_2(M)+I_3(M)) 
& = - i\left[ 1 - \left(1-\frac{1}{s}\right)^n\right]  \sum_{m=0}^{\infty} 
\left(\frac{1}{s+2m} - \frac{1}{2m+1} \right) \\
& \quad -i\sum_{j=2}^{n} \binom{n}{j} \frac{(-1)^{j-1}}{s^j} 
 \sum_{k=1}^{j-1} s^k \sum_{m=0}^{\infty} \frac{1}{(2m+1)^{k+1}} \\
& = i\left[ 1 - \left(1-\frac{1}{s}\right)^n\right]  \sum_{m=0}^{\infty} 
\frac{1}{2}\left[ \psi\left(\frac{s}{2}\right) - \psi\left(\frac{1}{2}\right)  \right] \\
& \quad -i\sum_{j=2}^{n} \binom{n}{j} \frac{(-1)^{j-1}}{s^j} 
 \sum_{k=1}^{j-1} s^k  (1-2^{-k-1})\zeta(k+1), 
\endaligned 
\]
where we used the well-known series expansion 
\begin{equation*} 
\psi(w) =
\frac{\Gamma'}{\Gamma}(w) = -\gamma_0 - \sum_{n=0}^{\infty}
\left( \frac{1}{w+n} - \frac{1}{n+1} \right)
\end{equation*}
of the digamma function.  
On the other hand, we have 
\[
\lim_{M \to \infty} 
\int_{1}^{\infty}|f_{s,n}(x) + x^{-1} f_{s,n}(x^{-1})-2x^{-1} f_{s,n}(1)|\, x^{-2M-3}\,dx
= 0
\]
by $f_{s,n}(x) + x^{-1} f_{s,n}(x^{-1})-2x^{-1} f_{s,n}(1) \ll x^{-1}(\log x)^n$ on $[1,\infty)$. 
As a result, 
\begin{equation} \label{s209}
\aligned 
\int_{1}^{\infty} & \left\{ f_{s,n}(x) + x^{-1} f_{s,n}(x^{-1})-2x^{-1} f_{s,n}(1)\right\} 
\frac{x\,dx}{x^2-1} \\
& =  i\left[ 1 - \left(1-\frac{1}{s}\right)^n\right]  \sum_{m=0}^{\infty} 
\frac{1}{2}\left[ \psi\left(\frac{s}{2}\right) - \psi\left(\frac{1}{2}\right)  \right] \\
& \quad -i\sum_{j=2}^{n} \binom{n}{j} \frac{(-1)^{j-1}}{s^j} 
 \sum_{k=1}^{j-1} s^k  (1-2^{-k-1})\zeta(k+1).
\endaligned 
\end{equation}

Combining \eqref{s207}, \eqref{eq0224_1}, \eqref{s208}, and \eqref{s209}, 
the right-hand side of Weil's explicit formula 
for $f_{s,n,\epsilon}$ tends to 
\begin{equation} \label{s211}
\aligned 
\, & - \frac{i}{s-1}
- i \left[ 1- \left(1-\frac{1}{s} \right)^n \right] \frac{\zeta'}{\zeta}(s) 
 + i \sum_{j=1}^{n} \binom{n}{j} \frac{(-1)^{j-1}}{s^j} 
\left( \sum_{k=0}^{j-1} \eta_k (-s)^k -\frac{1}{s} 
\right)   \\
& \quad 
- i\left[ 1 - \left(1-\frac{1}{s}\right)^n\right]  \sum_{m=0}^{\infty} 
\left[ \frac{1}{2}\psi\left(\frac{s}{2}\right) - \frac{1}{2}\psi\left(\frac{1}{2}\right) - \gamma_0 - \log 4\pi\right] \\
& \quad 
 +i\sum_{j=2}^{n} \binom{n}{j} \frac{(-1)^{j-1}}{s^j} 
 \sum_{k=1}^{j-1} s^k  (1-2^{-k-1})\zeta(k+1)
\endaligned 
\end{equation}
as $\epsilon \to 0+$. 
We have $\eta_0=\lim_{s \to 0}( s^{-1}-(\zeta'/\zeta)(1-s))$ by \eqref{s103}. Therefore, 
\begin{equation} 
\eta_0 - \frac{1}{2}\psi\left(\frac{1}{2}\right) + \frac{1}{2}\log \pi 
= -\lim_{s \to 0} \frac{\xi'}{\xi}(1-s) +1
= \frac{\xi'}{\xi}(0) + 1 
\end{equation}
by the functional equation for $\xi(s)$. Therefore, \eqref {s211} equals to 
\begin{equation} \label{s210}
\aligned
\, & -\frac{i}{s-1} - i\left[ 1 - \left(1-\frac{1}{s}\right)^n\right] 
\left(  \frac{\xi'}{\xi}(s)
- \frac{1}{s-1} 
-\frac{\xi'}{\xi}(0)-1\right) \\
& \quad + i 
\sum_{j=2}^{n} \binom{n}{j} \frac{(-1)^{j-1}}{s^j} 
\sum_{k=1}^{j-1}
\Bigl[ (-1)^k \eta_k+  (1-2^{-k-1})\zeta(k+1) \Bigr] s^k.
\endaligned 
\end{equation}
Hence, \eqref {s205} holds for $\Re(s)>1$ by \eqref{s210}. 

Finally, we prove that $\rho \in \mathcal{Z}$ 
are removable singularities of $H_n(s)$. 
We have 
\begin{equation} \label{s212}
\frac{\xi(s)}{\xi(s)+\xi'(s)}
= (s-\rho)\left(\frac{1}{m_\rho}+o(1)\right) 
\end{equation}
near $s=\rho$ by using the series expansion 
$\xi(s) = c(m_\rho)(s-\rho)^{m_\rho}(1+o(1))$ ($c(m_\rho)\not=0$).  
Hence, $H_n(s)$ is analytic in a neighborhood of $s=\rho$ 
by \eqref{s204} and \eqref{s205}.  
\end{proof}

\begin{proposition} \label{prop_202}
Let $G_n(z)$ be functions defined in \eqref{eq0223_1}. 
Then, the restriction of $G_n(z)$ to the real line ($z \in \R$) is bounded, 
real-analytic,  
and belongs to $L^2(\R)$ for all positive integers $n$. 
\end{proposition}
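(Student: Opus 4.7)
The plan is to combine the factorization $H_n(s) = \frac{i\xi(s)}{\xi(s) + \xi'(s)}\, M_n(s)$ from Proposition \ref{prop_201} with the (unconditional) result of Lagarias \cite{La06} that $E(z) = \xi(1/2 - iz) + \xi'(1/2 - iz)$ is a Hermite--Biehler function. For the real-analyticity of $G_n(z)$ on $\R$ I would first check that $H_n$ extends holomorphically to an open neighborhood of the critical line $\Re(s) = 1/2$. By Proposition \ref{prop_201} each $\rho \in \mathcal{Z}$ is a removable singularity of $H_n$, so the only possible genuine poles on $\Re(s) = 1/2$ are zeros of $\xi(s) + \xi'(s)$. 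The Hermite--Biehler property says $E$ has no real zeros, which is precisely the statement that $\xi(s) + \xi'(s) \neq 0$ for $\Re(s) = 1/2$. Hence $H_n$ is analytic in an open strip about the critical line, so $G_n$ is real-analytic on $\R$ and, in particular, bounded on every compact subset.

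Next I would establish the decay estimate $H_n(1/2 + it) = O(1/|t|)$ as $|t| \to \infty$. Combining the two functional equations of $\xi$ gives $\xi'/\xi(s) = -\overline{\xi'/\xi(s)}$ for $s = 1/2 + it$ outside $\mathcal{Z}$, so $\xi'/\xi(1/2 + it)$ is purely imaginary and therefore
\[
\left|\frac{\xi(1/2 + it)}{\xi(1/2 + it) + \xi'(1/2 + it)}\right| = \frac{1}{|1 + \xi'/\xi(1/2 + it)|} \le 1.
\]
For the bracket defining $H_n$ in \eqref{s104}, the identity $1 - (1 - 1/s)^n = n/s + O(|s|^{-2})$ shows $[1 - (1 - 1/s)^n] = O(1/|t|)$, the rational double sum in $j, k$ is $O(1/|t|)$ because every individual term is $O(|s|^{k - j})$ with $j - k \ge 1$, and $1/(s - 1) = O(1/|t|)$; hence the bracket is $O((1 + |\xi'/\xi(s)|)/|t|)$. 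At any fixed positive distance from $\mathcal{Z}$ the prefactor bound combined with local boundedness of $\xi'/\xi$ gives $O(1/|t|)$, while near each $\rho \in \mathcal{Z}$ the simple pole $\xi'/\xi(s) = m_\rho/(s - \rho) + O(1)$ is cancelled by the simple zero $\xi/(\xi + \xi')(s) = (s - \rho)/m_\rho + O((s - \rho)^2)$, so the same $O(1/|t|)$ bound persists uniformly through these neighborhoods.

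Continuity on compacta from the first step plus the pointwise bound $H_n(1/2 + it) = O(1/|t|)$ at infinity together give $G_n \in L^\infty(\R) \cap L^2(\R)$. The main obstacle is the bookkeeping of the pole--zero cancellation in the second step: each factor individually misbehaves at every $\rho \in \mathcal{Z}$. The cleanest device is to pair $\xi/(\xi + \xi')$ with $\xi'/\xi$ directly using the local Laurent expansions, which yields $\xi/(\xi + \xi')(s) \cdot \xi'/\xi(s) = 1 + O(s - \rho)$ near each zero; then the dangerous contribution $[1 - (1 - 1/s)^n]\, \xi'/\xi(s)$ to $H_n$ becomes $[1 - (1 - 1/\rho)^n](1 + O(s - \rho))$, which is uniformly $O(1/|t|)$ on the critical line.
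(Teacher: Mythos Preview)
Your first paragraph contains a genuine error: Lagarias's result in \cite{La06} is \emph{not} unconditional. As the present paper records in Section~\ref{section_4}, \cite[Theorem~1]{La06} shows that $E(z)=\xi(1/2-iz)+\xi'(1/2-iz)$ lies in the Hermite--Biehler class \emph{if and only if} the Riemann hypothesis holds. You therefore cannot invoke it to obtain analyticity of $H_n$ on the critical line in an unconditional proposition. (Separately, the Hermite--Biehler inequality $|E^\sharp(z)|<|E(z)|$ on $\C_+$ does not by itself exclude real zeros of $E$; multiple zeros of $\xi$ on the critical line, should they exist, would give real zeros of $E$.) The good news is that the tool you need is already in your second paragraph: the purely-imaginary property of $\xi'/\xi$ on $\Re(s)=1/2$ gives $|1+\xi'/\xi|\ge 1$ there, hence $\xi+\xi'\neq 0$ on the critical line outside $\mathcal{Z}$; combined with Proposition~\ref{prop_201} this yields analyticity at every point of $\Re(s)=1/2$. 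This is essentially the paper's route, which writes the prefactor as $\tfrac{i}{2}\bigl(1+\overline{E(\bar z)}/E(z)\bigr)$ and observes that the quotient has modulus one on $\R$, with any real zeros of $E$ cancelling between numerator and denominator.

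A smaller point on your decay estimate: the claim $H_n(1/2+it)=O(1/|t|)$ is slightly optimistic. In the local expansion $\xi'/\xi(s)=m_\rho/(s-\rho)+O(1)$ near $\rho$, the ``$O(1)$'' is not uniform in $\rho$; it is of size $\log|\Im\rho|$ (it absorbs the contributions of all the other zeros). The same growth appears away from $\mathcal{Z}$. So what one actually obtains is $H_n(1/2+it)\ll |t|^{-1}\log|t|$, which is precisely the bound the paper derives from \cite[Theorems~9.2 and 9.6(A)]{Tit86}. This still gives $G_n\in L^2(\R)$, so the conclusion stands once the analyticity step is repaired as above.
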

\begin{proof} 
By \eqref{eq0223_1}, it is sufficient to 
prove that $H_n(s)$ is bounded, 
real-analytic, and $L^2$-integrable on the line vertical $\Re(s)=1/2$. 
We have 
\[
\frac{i\xi(s)}{\xi(s)+\xi'(s)}
= \frac{i}{2} \left( 1 + \frac{\xi(s)-\xi'(s)}{\xi(s)+\xi'(s)} \right)
= \frac{i}{2} \left( 1 + \frac{\xi(1-s)+\xi'(1-s)}{\xi(s)+\xi'(s)} \right)
\]
by $\xi(s)=\xi(1-s)$. If $z$ is real, $\xi(1/2+iz)=\overline{\xi(1/2-iz)}$ 
and  $\xi'(1/2+iz)=\overline{\xi'(1/2-iz)}$ by $\xi(\bar{s})=\overline{\xi(s)}$. 
Therefore, 
\[
\left| \frac{\xi(1-s)+\xi'(1-s)}{\xi(s)+\xi'(s)} \right| =1 
\]
when $s=1/2-iz$ for $z \in \R$, where   
zeros of $\xi(s)+\xi'(s)$ in the denominator cancel out in the numerator $\xi(1-s)+\xi'(1-s)$, 
even if they exist. 
The meromorphic function $M_n(s)$ of \eqref{s204} has poles of 
order one at $\rho \in \mathcal{Z}$, 
but $H_n(s)$ is holomorphic there, 
as proved in Proposition \ref{prop_201}. 
Hence, $H_n(s)$ is bounded and analytic on the line $\Re(s)=1/2$ 
by \eqref{s204} and \eqref{s205}.  
On the other hand, in the vertical strip $0 \leq \Re(s) \leq 1$, 
we have $(\Gamma'/\Gamma)(s/2) \ll \log |s|$ (well-known) and 
\[
\frac{\zeta'}{\zeta}(s)
= \sum_{|\Im(s)-\Im(\rho)| \leq 1} \frac{1}{s-\rho}+O(\log|\Im(s)|)
\]
by \cite[Theorem 9.6 (A)]{Tit86}. 
In both estimates, implied constants are uniform in $0 \leq \Re(s) \leq 1$. 
The number of zeros $\rho$ satisfying $|\Im(s)-\Im(\rho)| \leq 1$ 
is $O(\log |\Im(s)|)$ counting with multiplicity by \cite[Theorem 9.2]{Tit86}. 
Therefore, 
$H_n(s) \ll |s|^{-1}\log |s|$ as $|s| \to \infty$ by \eqref{s104}.  
Hence $H_n(s)$ is square integrable on $\Re(s)=1/2$. 
\end{proof}

\subsection{Basic analytical properties of $H_n(s)$}  

We enumerate some basic analytic properties of $H_n(s)$ 
for the convenience of subsequent studies 
although not directly necessary for the main results of this paper. 

\begin{proposition} 
Functions $H_n(s)$ defined in \eqref{s104} 
have the following properties. 
\begin{enumerate}
\item[(1)]  $H_n(s)$ 
are meromorphic functions on $\C$ taking real values on the real line. 
\item[(2)]  $H_n(s)$ are neither real nor purely imaginary valued on the line $\Re(s)=1/2$ in general. 
\end{enumerate}

The zeros of $\xi(s)+\xi'(s)$ are classified into those originating 
from multiple zeros of $\xi(s)$ and those not. 
It is this latter category that appears as poles of $H_n(s)$. 
\begin{enumerate}
\item[(3)] $H_n(s)$ are analytic in $\C$ except for points $\lambda$
such that $\xi(\lambda)+\xi'(\lambda)=0$ and $\lambda\not\in \mathcal{Z}$. 
In particular, $s=0$ and $s=1$ are not poles of $H_n(s)$. 
\comment{
Since $M_n(s)$ is analytic outside $\mathcal{Z}$ by definition \eqref{s204}
and the absolute convergence of the sum 
$\sum_{\rho \in \mathcal{Z}}m_\rho|\rho|^{-2}$, we obtain the following: 
}
\item[(4)]  If $\lambda$ is an zero of $\xi(s)+\xi'(s)$ of order $m$ and 
a zero of $M_n(s)$ of order less than $m$, then it is a pole of $H_n(s)$. 
That is, the locations of possible poles of $H_n(s)$ are independent of $n$, 
but whether or not they are actual poles and their order depend on $n$.
\item[(5)]  The set of zeros of $\xi(s)+\xi'(s)$ 
and the set of poles of $H_n(s)$ are closed under complex conjugation. 
\end{enumerate}
\end{proposition}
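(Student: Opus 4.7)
The plan is to deduce all five items from the factorisation $H_n(s)=\frac{i\xi(s)}{\xi(s)+\xi'(s)}\,M_n(s)$ of Proposition \ref{prop_201}, together with the local expansion \eqref{s212} and the functional equation $\xi(s)=\xi(1-s)$. Since both factors are meromorphic on $\C$ and $M_n$ has simple poles exactly on $\mathcal{Z}$, the meromorphicity asserted in (1) is immediate. Reality of $H_n$ on $\R$ follows by inspection of \eqref{s104}: $\xi$, $\xi'$, and $\xi'/\xi$ take real values on $\R$ (since $\xi(\bar s)=\overline{\xi(s)}$), the Laurent coefficients $\eta_k$ are real because $\zeta'/\zeta$ is real on $(1,\infty)$, and $\zeta(k+1)\in\R$; every remaining factor in the braces is a real rational function of $s$.

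For (3), any pole of $H_n$ must come either from $\mathcal{Z}$ or from a zero of $\xi+\xi'$. At $\rho\in\mathcal{Z}$ of multiplicity $m_\rho$, $\xi+\xi'$ vanishes to order $m_\rho-1$ while $\xi$ vanishes to order $m_\rho$, so the prefactor has a simple zero (see \eqref{s212}) and cancels the simple pole of $M_n$: this is the removability already established in Proposition \ref{prop_201}. To rule out $s=0$ and $s=1$ I would use $\lambda_1=-\xi'/\xi(0)$ from \eqref{eq0301_4}, $\xi(0)=\xi(1)=1/2$, and the differentiated functional equation $\xi'(1-s)=-\xi'(s)$ to obtain $\xi(0)+\xi'(0)=(1-\lambda_1)/2$ and $\xi(1)+\xi'(1)=(1+\lambda_1)/2$, both nonzero since $\lambda_1\approx 0.023$. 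Item (4) is then an order-counting consequence of the factorisation: if $\lambda\notin\mathcal{Z}$ is a zero of $\xi+\xi'$ of order $m$, then $\xi(\lambda)\neq 0$ forces the prefactor to have a pole of exactly order $m$, while $M_n$ is analytic at $\lambda$, so a zero of $M_n$ of order $k<m$ leaves a pole of order $m-k$.

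For (5), $\xi(\bar s)=\overline{\xi(s)}$ and $\xi'(\bar s)=\overline{\xi'(s)}$ give closure of the zero set of $\xi+\xi'$ under conjugation, and $\mathcal{Z}=\overline{\mathcal{Z}}$ together with \eqref{s204} gives $M_n(\bar s)=\overline{M_n(s)}$, so the pole set of $H_n$ is likewise conjugation-symmetric.

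The main obstacle is (2). I would exploit the critical-line symmetry: differentiating $\xi(s)=\xi(1-s)$ gives $\xi'(1-s)=-\xi'(s)$, and on $\Re s=1/2$ one has $1-s=\bar s$, so combining with $\xi(\bar s)=\overline{\xi(s)}$ forces $\xi(1/2+it)\in\R$ and $\xi'(1/2+it)\in i\R$. Writing $\xi+\xi'=a+ib$ with $a,b\in\R$, the prefactor becomes $i\xi/(\xi+\xi')=(ab+ia^{2})/(a^{2}+b^{2})$, which has both nonzero real and imaginary parts away from the real zeros of $\xi$ on the line. The upgrade to $H_n$ requires controlling possible cancellations against $M_n$; the cleanest route is to specialise to $n=1$, use the explicit expression \eqref{eq0301_3}, and verify at a single $t$ that neither $\Re H_1(1/2+it)$ nor $\Im H_1(1/2+it)$ vanishes, which already justifies the ``in general'' qualifier.
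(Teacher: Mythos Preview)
Your proposal is correct and follows essentially the same route as the paper: deduce everything from the factorisation \eqref{s205}, use the removability at $\mathcal{Z}$ from Proposition~\ref{prop_201}, read off (4) by order counting, obtain (5) from the reflection $\overline{H_n(\bar s)}=H_n(s)$, and handle (2) by specialising to $n=1$ via \eqref{eq0301_3}. Your explicit verification that $\xi(0)+\xi'(0)=(1-\lambda_1)/2$ and $\xi(1)+\xi'(1)=(1+\lambda_1)/2$ are nonzero is a useful addition, since the paper's proof of (3) does not spell out why $s=0,1$ are excluded.
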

\begin{proof} 
The first half of (1) follows from Lemma \ref{lem_201} and Proposition \ref{prop_201}, 
and the second half is trivial by definition \eqref{s104}. 
Formula \eqref{eq0301_3} shows that $H_1(s)$ 
is neither real nor purely imaginary valued on the line $\Re(s)=1/2$, 
since $(\xi'/\xi)(s)$ takes non-constant pure imaginary values on $\Re(s)=1/2$. 
Hence, (2) holds.  
From Lemma \ref{lem_201} and Proposition \ref{prop_201}, 
$H_n(s)$ has no poles other than the zeros of $\xi(s)$ and $\xi(s)+\xi'(s)$. 
Furthermore, from the proof of Proposition \ref{prop_202}, 
$H_n(s)$ has no poles at the zeros of $\xi(s)$. 
Hence, (3) holds. Moreover, (4) holds by \eqref{s205}. 
Since $\xi(s)+\xi'(s)$ and $H_n(s)$ are real valued on the real line, 
(5) holds. 
\end{proof}

%
\section{Proof of Theorem \ref{thm_1}} \label{section_4}
%

\subsection{Model spaces and related notions} \label{section_3_1}

To prove Theorem \ref{thm_1}, 
we need the theory of model spaces in the upper half-plane. 
Therefore, we first briefly review the notions related to model spaces in the upper half-plane 
according to 
Garcia--Ross \cite[Section 10]{GaRo15}, 
Makarov--Poltoratski \cite[Section 1.2]{MaPo05}, 
and Havin--Mashreghi \cite[Section 2]{MR2016246}. 
(See also Garcia--Mashreghi--Ross \cite[Sections 3.6.3 and 5.10.4]{GMR}.)
\medskip

Let $\C_+:=\{z \in \C\,|\, \Im(z)>0\}$ be the upper half-plane. 
Let $\mathbb{H}^\infty(\C_+)$ be the space of all bounded analytic functions in $\C_+$. 
An {\it inner function} in $\C_+$ is a function $\Theta \in \mathbb{H}^\infty(\C_+)$ 
such that $\lim_{y \to 0+}|\Theta(x+iy)|=1$ for almost all $x \in \R$ 
with respect to the Lebesgue measure. 
If an inner function $\Theta$ in $\C_+$ extends to a meromorphic function 
in the whole complex plane $\C$, 
it is called a {\it meromorphic inner function} in $\C_+$. 
\medskip

Let $\mathbb{H}^2:=\mathbb{H}^2(\C_+)$ be the Hardy space in $\C_+$, 
which is the space of all analytic functions $F$ in $\C_+$ 
for which $\sup_{y>0} \int_{\R} |F(x+iy)|^2 \, dx < \infty$. 
As usual, we identify $\mathbb{H}^2$ with a closed subspace of $L^2(\R)$ 
as a Hilbert space by taking an almost everywhere
well-defined boundary function 
$F|_\R(x) := \lim_{y \to 0} F(x+iy)$ on $\R$. 
In particular, $\langle F,G \rangle_{\mathbb{H}^2}
= \langle F|_\R,G|_\R\rangle_{L^2(\R)}$, 
where  $\langle \cdot,\cdot \rangle_{L^2(\R)}$ is the the standard inner product of $L^2(\R)$. 
\medskip

If $\Theta$ is an inner function in $\C_+$, 
the set $\Theta \mathbb{H}^2 = \{ \Theta(z)F(z) \, |\, F \in \mathbb{H}^2\}$ 
forms a closed subspace of $\mathbb{H}^2$ 
invariant with respect to multiplication by all exponentials $\exp(i a z)$, $a>0$.  
For an inner function $\Theta$ in $\C_+$, 
a {\it model subspace} $\mathcal{K}(\Theta)$ in $\C_+$ 
is the subspace of $\mathbb{H}^2$ defined  as the orthogonal complement 
of $\Theta \mathbb{H}^2$ in $\mathbb{H}^2$, that is, 
$\mathcal{K}(\Theta):=\mathbb{H}^2 \ominus \Theta \mathbb{H}^2$. 
By the identification of $\mathbb{H}^2$ with a subspace of $L^2(\R)$, 
the model space 
$\mathcal{K}(\Theta)$ is also regarded as a subspace of $L^2(\R)$. 
If $\Theta(z)$ is a meromorphic inner function, 
all members of $\mathcal{K}(\Theta)$ are meromorphic in $\C$. 
\medskip

A function $E(z)$ in the {\it Hermite--Biehler class} 
is an entire function satisfying inequality 
$|E^\sharp(z)| < |E(z)|$ for all $z \in \C_+$. 
This is called {\it structure function} or {\it de Branges function} in \cite{La06}. 
(It is also called the {\it de Branges structure function} in other literature.)  
Every  $E(z)$ in the Hermite--Biehler class 
generates the {\it de Branges space} $\mathcal{H}(E)$, 
consisting of all entire functions $f(z)$ 
such that both $f(z)/E(z)$ and $\overline{f(\bar{z})}/E(z)$ 
belong to the Hardy space $\mathbb{H}^2$. 

If $\Theta(z)$ is a meromorphic inner function in $\C_+$, 
there exists a function $E(z)$ in the Hermite--Biehler class 
such that $\Theta(z)=\overline{E(\bar{z})}/E(z)$. 
Therefore, $|\Theta(z)|<1$ for $z \in \C_+$. 
Further, the model space $\mathcal{K}(\Theta)$ and 
the de Branges space $E(z)$ are isomorphic as a Hilbert space 
by the mapping $F(z) \mapsto E(z)F(z)$. 
In particular, every $F(z) \in \mathcal{K}(\Theta)$ 
is a meromorphic function in $\C$ such that $E(z)F(z)$ is an entire function. 

The function $E(z)$ in the Hermite--Biehler class 
for one meromorphic inner function $\Theta(z)$ 
is not unique because one can add extra zeros on
the real axis to $E(z)$, they will cancel out of $\Theta(z)$. 
However if we impose the condition that $E(z)$ has no zeros on the real axis, 
it is unique up to multiplication by a constant.  

\subsection{Conditions equivalent to the Riemann hypothesis}

We define 
\begin{equation} \label{s201}
E(z):=\xi(1/2-iz)+\xi'(1/2-iz)
\end{equation}
and
\begin{equation} \label{s202}
\Theta(z):=\overline{E(\bar{z})}/E(z). 
\end{equation}
In \eqref{s201}, $\xi'(1/2-iz)$ means the substitution of $s=1/2-iz$ 
into the derivative $\xi'(s)$.

\begin{proposition} 
Let $E(z)$ and $\Theta(z)$ be functions defined in \eqref{s201} and \eqref{s202}, respectively. 
Then the following are equivalent:  
\begin{enumerate}
\item[(1)] The Riemann hypothesis holds; 
\item[(2)] $E(z)$ belongs to the Hermite--Biehler class;  
\item[(3)] $\Theta(z)$ is a meromorphic inner function for $\C_+$. 
%
\end{enumerate}
\end{proposition}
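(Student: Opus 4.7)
The plan is to dispense with (2) $\Leftrightarrow$ (3) by invoking the general correspondence between Hermite--Biehler entire functions and meromorphic inner functions in $\C_+$ reviewed in Section \ref{section_3_1}, and then to reduce (1) $\Leftrightarrow$ (2) to a clean closed-form identity for $|E(z)|^2 - |E^\sharp(z)|^2$. Since $E$ is entire, $\Theta := E^\sharp/E$ is automatically meromorphic in $\C$, and $E^\sharp(x) = \overline{E(x)}$ for real $x$ makes the a.e.\ boundary condition $|\Theta(x)| = 1$ automatic. The substantive content of (2) $\Leftrightarrow$ (3) is then that $E \neq 0$ on $\C_+$ with $|E^\sharp| < |E|$ there is equivalent to $\Theta$ being bounded analytic on $\C_+$; the maximum modulus principle (applied to the non-constant $\Theta$, as $\xi'/\xi$ is non-constant) upgrades the weak bound $|\Theta| \le 1$ to the strict Hermite--Biehler inequality.

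For (1) $\Leftrightarrow$ (2), I first derive the key identity. Using $\xi(\bar s) = \overline{\xi(s)}$ together with $\xi(s) = \xi(1 - s)$ and hence $\xi'(s) = -\xi'(1-s)$, a direct calculation yields $E^\sharp(z) = \xi(w) - \xi'(w)$ with $w := 1/2 - iz$, whence
\[
|E(z)|^2 - |E^\sharp(z)|^2 = 4 \Re\bigl(\xi(w)\,\overline{\xi'(w)}\bigr).
\]
As $z$ runs over $\C_+$, $w$ runs over $\Re w > 1/2$, and where $\xi(w) \neq 0$ the right-hand side equals $4 |\xi(w)|^2 \Re(\xi'(w)/\xi(w))$.

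For (1) $\Rightarrow$ (2): Under the Riemann hypothesis, $\xi$ has no zeros on $\Re w > 1/2$, and the Hadamard factorization of $\xi$ (pairing $\rho$ with $\bar\rho$, which coincides with $\rho \leftrightarrow 1-\rho$ under RH) gives the conditionally convergent expansion
\[
\frac{\xi'(w)}{\xi(w)} = \sum_\rho \frac{1}{w - \rho},
\]
with $\Re(1/(w - \rho)) = (\Re w - 1/2)/|w - \rho|^2 > 0$ term by term. Hence $\Re(\xi'(w)/\xi(w)) > 0$ on $\Re w > 1/2$, yielding the strict Hermite--Biehler inequality, and the nonvanishing $E = \xi\,(1 + \xi'/\xi) \neq 0$ on $\C_+$ follows from $\Re(1 + \xi'/\xi) > 1$. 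For (2) $\Rightarrow$ (1) I argue contrapositively: suppose $\xi(\rho_0) = 0$ with $\Re \rho_0 > 1/2$ (we may arrange this by the functional equation), and let $z_0 \in \C_+$ be the corresponding point. If $\rho_0$ is a simple zero, $E(z_0) = \xi'(\rho_0)$ and $E^\sharp(z_0) = -\xi'(\rho_0)$ give $|E(z_0)| = |E^\sharp(z_0)|$, contradicting the HB strict inequality; if $\rho_0$ is multiple, $\xi'(\rho_0) = 0$ as well, so $E(z_0) = 0$, which again violates the HB condition.

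The main delicate point will be justifying the positivity of $\Re(\xi'(w)/\xi(w))$ on $\Re w > 1/2$ under RH. The series $\sum_\rho 1/(w-\rho)$ is only conditionally convergent, so the positivity must be read off after the symmetric pairing $\rho \leftrightarrow \bar\rho$ has been made, rather than from any absolute bound. Every other step is routine manipulation of the functional equation and the identity above.
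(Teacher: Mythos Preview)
Your argument for (1) $\Leftrightarrow$ (2) is correct and more self-contained than the paper's, which simply cites \cite[Theorem~1]{La06}. The identity $|E|^2-|E^\sharp|^2=4\,|\xi|^2\,\Re(\xi'/\xi)$ together with the termwise positivity of $\Re\bigl(1/(w-\rho)\bigr)$ under RH (made rigorous by the pairing $\rho\leftrightarrow\bar\rho$, which turns the symmetric series into an absolutely convergent positive one) is exactly the content of Lagarias's result, so you are essentially reproving it.

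There is, however, a genuine gap in your (3) $\Rightarrow$ (2). The ``general correspondence'' recalled in Section~\ref{section_3_1} only says that a meromorphic inner function $\Theta$ can be written as $E_0^\sharp/E_0$ for \emph{some} Hermite--Biehler $E_0$; it does not say that the \emph{specific} entire function $E=\xi(1/2-iz)+\xi'(1/2-iz)$ is HB. Indeed, if $E_0$ is HB and $f$ is any entire function real on $\R$, then $E:=fE_0$ satisfies $E^\sharp/E=E_0^\sharp/E_0$ but need not be HB (take $E_0(z)=z+i$, $f(z)=z^2+4$). Your maximum-modulus step gives $|\Theta|<1$ on $\C_+$, hence $|E^\sharp|<|E|$ \emph{where $E\neq 0$}; you still need to exclude zeros of $E$ in $\C_+$.

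The fix is immediate from your own contrapositive argument and is precisely what the paper does: if RH fails, pick $z_0\in\C_+$ with $\xi(1/2-iz_0)=0$. Whether the zero is simple or multiple, the local expansion gives $\Theta(z_0)=-1$ (in the multiple case $E$ and $E^\sharp$ both vanish to order $m-1$ with opposite leading coefficients), so $|\Theta(z_0)|=1$, contradicting (3). Thus you get not-(1) $\Rightarrow$ not-(3), i.e.\ (3) $\Rightarrow$ (1), and the cycle (1) $\Rightarrow$ (2) $\Rightarrow$ (3) $\Rightarrow$ (1) closes. Reorganize the proof this way rather than claiming (3) $\Rightarrow$ (2) from general theory.
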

\begin{proof} 
(1) and (2) are equivalent by \cite[Theorem 1]{La06}. 
The implication of (2) to (3) is immediate. 
This is because $\Theta(z)$ has norm one on the real axis, 
is analytic in $\C_+$, and is bounded in the upper half-plane, 
which makes it an inner function.
We prove that (3) implies (2) by contradiction. 
Note that (3) implies $|\Theta(z)|<1$ for all $z \in \C_+$. 
If (2) is false, then (1) is also false, 
so $\xi(1/2-iz_0)=0$ for some $z_0 \in \C_+$. 
By writing $\xi(1/2-iz)=c(z-z_0)^mF(z)$ 
for some $c \not=0$, $m \in \Z_{>0}$, 
and an entire function $F(z)$ with $F(z_0)\not=0$, 
we see $\Theta(z_0)=-1$. 
This is a contradiction, and therefore (3) implies (2). 
Hence, (2) and (3) are equivalent.
\comment{
(3) and (4) are equivalent by definition of model spaces, 
noting that model spaces in \cite{MaPo05} allow any inner function. 
Therefore, 
the fact that the particular $\Theta(z)$ is meromorphic function (unconditionally) 
is mentioned in (4). If $\Theta(z)$ is holomorphic in $\C_+$, 
it becomes an inner function. 
}
\end{proof}

We comment on the direct relation between (1) and (3). 
The function $\Theta(z)$ in \eqref{s202} is unconditionally meromorphic on $\C$ 
and unconditionally takes absolute value one everywhere on the real axis by definition. 
Therefore, it is an meromorphic inner function if it is analytic in $\C_+$. 
The Riemann hypothesis is used to prove it has no poles in the upper half-plane.

\subsection{A special orthonormal basis in a model space} \label{section_2_2}

For the entire function $E(z)$  defined in  \eqref{s201}, we define 
\begin{equation}   \label{s203}
A(z) := (E(z)+\overline{E(\bar{z})})/2. 
\end{equation}
Then $A(z)=\xi(1/2-iz)$, because $\overline{E(\bar{z})}=\xi(1/2-iz)-\xi'(1/2-iz)$ 
by functional equations $\xi(s)=\xi(1-s)$ and $\xi(s)=\overline{\xi(\bar{s})}$. 
Therefore, the Riemann hypothesis is equivalent 
to all the zeros of $A(z)$ lie on the real axis. 

\begin{proposition} \label{prop_203} 
Let $\Theta(z)$ be a function defined in  \eqref{s202}. 
We assume that the Riemann hypothesis holds 
and denote by $\Gamma~(\subset \R)$ the set of all ordinates of distinct zeros 
$\rho=1/2-i\gamma$ of $\xi(s)$.  
We define 
\begin{equation} \label{s213}
F_\gamma(z) := \sqrt{\frac{m_\gamma}{\pi}} \frac{i(1+\Theta(z))}{2(z-\gamma)} 
\end{equation}
for $\gamma \in \Gamma~ (\subset \R)$. 
Then, the family of functions 
$\{F_\gamma(z)\}_{\gamma \in \Gamma}$ 
forms an orthonormal basis of the Hilbert space $\mathcal{K}(\Theta)$. 
\end{proposition}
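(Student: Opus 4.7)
The plan is to identify each $F_\gamma$ as a normalized boundary reproducing kernel of the model space $\mathcal{K}(\Theta)$ and then invoke the Clark basis theorem to promote the resulting orthonormal family to an orthonormal basis.

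First I would record two facts that drive everything. Since $\overline{E(\bar z)} = \xi(1/2-iz) - \xi'(1/2-iz)$ by the functional equations, and $A(z) = (E(z) + \overline{E(\bar z)})/2 = \xi(1/2-iz)$, the condition $\Theta(\gamma) = \overline{E(\gamma)}/E(\gamma) = -1$ for real $\gamma$ is equivalent to $A(\gamma) = 0$, i.e.\ to $\gamma \in \Gamma$ under RH. Expanding $\xi(s) = c(s-\rho)^{m_\gamma}(1 + O(s-\rho))$ at $\rho = 1/2-i\gamma$ and inserting into the quotient $\Theta(z) = (\xi(1/2-iz) - \xi'(1/2-iz))/(\xi(1/2-iz) + \xi'(1/2-iz))$ yields
\[
\Theta(z) = -1 - \frac{2i}{m_\gamma}(z - \gamma) + O\bigl((z-\gamma)^2\bigr),
\]
so $\Theta'(\gamma) = -2i/m_\gamma$, and the quotient $(1+\Theta(z))/(z-\gamma)$ has a removable singularity at $\gamma$.

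Next, the reproducing kernel of $\mathcal{K}(\Theta)$ at $w \in \C_+$ is
\[
k_w^\Theta(z) = \frac{i}{2\pi}\cdot\frac{1 - \overline{\Theta(w)}\,\Theta(z)}{z - \bar w},
\]
and taking the boundary value $w = \gamma$ gives $k_\gamma^\Theta(z) = i(1+\Theta(z))/(2\pi(z-\gamma))$, which lies in $\mathcal{K}(\Theta)$ thanks to the removable singularity. The reproducing property yields
\[
\|k_\gamma^\Theta\|_{L^2(\R)}^2 = k_\gamma^\Theta(\gamma) = \frac{i\,\Theta'(\gamma)}{2\pi} = \frac{1}{\pi m_\gamma},
\]
so the normalized kernel $\sqrt{\pi m_\gamma}\,k_\gamma^\Theta$ coincides with the $F_\gamma$ of \eqref{s213}. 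Orthogonality is then immediate from the reproducing property: for $\gamma \neq \gamma'$ in $\Gamma$, the inner product $\langle F_\gamma, F_{\gamma'}\rangle$ is proportional to $k_\gamma^\Theta(\gamma') = i(1+\Theta(\gamma'))/(2\pi(\gamma'-\gamma)) = 0$ since $\Theta(\gamma') = -1$.

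The step I expect to be the main obstacle is completeness of $\{F_\gamma\}_{\gamma\in\Gamma}$ in $\mathcal{K}(\Theta)$. For this I would appeal to the Clark basis theorem for meromorphic inner functions (see Makarov--Poltoratski \cite[Section 1.2]{MaPo05} and Garcia--Mashreghi--Ross \cite{GMR}): because $\Theta$ is meromorphic on $\C$ with $\Theta'(\gamma)\neq 0$ at every $\gamma \in \Gamma$, the Clark measure of $\Theta$ at the boundary value $-1$ is purely atomic, supported exactly on $\Gamma$ with masses $2\pi/|\Theta'(\gamma)| = \pi m_\gamma$, and the associated Aleksandrov--Clark unitary from $L^2$ of this measure onto $\mathcal{K}(\Theta)$ carries the natural orthonormal basis of atoms to $\{F_\gamma\}_{\gamma\in\Gamma}$, giving the desired orthonormal basis.
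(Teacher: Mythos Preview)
Your proposal is correct and essentially parallel to the paper's argument, with one organizational difference worth noting. The paper establishes that the family is an orthogonal basis by passing to the de Branges space $\mathcal{H}(E)$ and invoking de Branges' Theorem~22, which requires first checking that $A\notin\mathcal{H}(E)$; this is done via the explicit lower bound $A(iy)/E(iy)\gg(\log y)^{-1}$ as $y\to+\infty$. Only afterward does the paper bring in the Clark measure and the Makarov--Poltoratski unitary to compute the normalizing constants. You bypass the de Branges step entirely by identifying $F_\gamma$ with a normalized boundary reproducing kernel and then appealing directly to the Clark basis theorem for meromorphic inner functions, which simultaneously delivers orthonormality and completeness. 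Your route is more economical; the paper's route makes the de Branges structure (and in particular the obstruction $A\notin\mathcal{H}(E)$) visible, which ties in with the discussion of $\mathcal{H}(E)$ in the introduction.
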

\begin{proof} 
Let $A(z)$ be the entire function of \eqref{s203}. 
We have 
\[
\frac{A(iy)}{E(iy)} = 
\left(
1  - \frac{\log \pi}{2}
- \frac{8y}{1-4y^2} 
+ \frac{1}{2}\frac{\Gamma'}{\Gamma}\left(\frac{1}{4}+\frac{y}{2}\right)
+ \frac{\zeta'}{\zeta}\left(\frac{1}{2}+y\right)
 \right)^{-1} \gg (\log y)^{-1}
\]
as $y \to +\infty$ by the Stirling formula for the gamma function 
and the absolutely convergent Dirichlet series of $\zeta'/\zeta(\sigma)$ 
for $\sigma>1$. 
This lower bound shows that $A(z)$ does not belong to $\mathcal{H}(E)$ 
by \cite[Proposition 2.1]{Re02}. 
Therefore, the family $\{A(z)/(z-\gamma)\}_{\gamma \in \Gamma}$ 
forms an orthogonal basis of the de Branges space $\mathcal{H}(E)$ 
by \cite[Theorem 22]{dB68}, 
and hence,  $\{i(1+\Theta(z))/(2(z-\gamma))\}_{\gamma \in \Gamma}$ 
forms an orthogonal basis of $\mathcal{K}(\Theta)$. 

Let $\mu_\Theta$ be the positive discrete measure on $\R$ 
supported on the set $\sigma(\Theta):=\{x\in \R\,|\,\Theta(x)=-1\}=\Gamma$ 
and $\mu_\Theta(x)=2\pi/|\Theta'(x)|$. 
Then the restriction map $F \mapsto F|_{\Gamma}$ 
is a unitary operator $\mathcal{K}(\Theta) \to L^2(\mu_\Theta)$ 
(\cite[Theorem 2.1]{MaPo05}). 
Therefore, 
\begin{equation} \label{s214}
\sqrt{\frac{2}{\pi|\Theta'(\gamma)|}} \frac{i(1+\Theta(z))}{2(z-\gamma)}, \quad \gamma \in \Gamma
\end{equation}
forms an orthonormal basis of $\mathcal{K}(\Theta)$. 
We have $\Theta'(\gamma)/2
=\lim_{z \to \gamma}(1+\Theta(z))/(2(z-\gamma))
=-i/m_\gamma
$ by \eqref{s212}, 
where $m_\gamma$ is the multiplicity of 
the zero of $\xi(s)$ at $s=1/2- i\gamma$. 
Hence, \eqref{s214} is nothing but \eqref{s213}. 
\end{proof}

\subsection{Proof of Theorem \ref{thm_1}} 

It suffices to show that \eqref{s105} holds assuming the Riemann hypothesis 
if considering Li's criterion.  
We assume that the Riemann hypothesis holds 
and denote by $\Gamma ~(\subset \R)$ the set of ordinates of distinct zeros 
$\rho=1/2-i\gamma$ of $\xi(s)$.  
We let $m_\gamma$ count the multiplicity of 
the zero of the function $\xi(s)$ at $s=1/2- i\gamma$. 
If $\gamma \in \Gamma$, then $-\gamma$ and $\overline{\gamma}$ 
also belong to $\Gamma$ with the same multiplicity 
by two functional equations $\xi(s)=\xi(1-s)$ and $\xi(s)=\overline{\xi(\bar{s})}$. 

Substituting $s=1/2-iz$ into \eqref{s204} and \eqref{s205}, 
and then using \eqref{s213}, 
\begin{equation} \label{eq0225_1}
G_n(z) 
= \sum_{\gamma \in \Gamma} 
\sqrt{\pi m_\gamma} \, \left[\, 1 - \left(1-\frac{1}{1/2-i\gamma} \right)^n \,\right]
F_\gamma(z),
\end{equation}
since $i\xi(s)/(\xi(s)+\xi'(s))=i(1+\Theta(z))/2$ 
and $\xi(s)=A(z)$ if $s=1/2-iz$. 
By Proposition \ref{prop_202}, $G_n(z)$ unconditionally belongs to $L^2(\R)$, 
and by Proposition \ref{prop_203}, 
$F_\gamma(z)$ on the right-hand side form an orthonormal basis of 
$\mathcal{K}(\Theta)$. 
We verify that \eqref{eq0225_1} is an expansion of $G_n(z)$ 
with respect to the basis $F_\gamma(z)$ 
by checking the square summability of the coefficients on the right-hand side. 
We have 
\begin{equation} \label{eq_0503_1}
\aligned 
\, & \left\vert\, 1 - \left(1-\frac{1}{1/2-i\gamma} \right)^n \,\right\vert^2 \\
& \qquad  = \left[\, 1 - \left(1-\frac{1}{1/2-i\gamma} \right)^n \,\right]
\left[\, 1 - \left(1-\frac{1}{1/2+i\gamma} \right)^n \,\right] \\
& \qquad =
\left[\, 1 - \left(1-\frac{1}{1/2-i\gamma} \right)^n \,\right]
+
\left[\, 1 - \left(1-\frac{1}{1/2+i\gamma} \right)^n \,\right], 
\endaligned 
\end{equation}
since $\left(1-\frac{1}{1/2+i\gamma} \right)=\left(1-\frac{1}{1/2-i\gamma} \right)^{-1}$ 
for $\gamma \in \R$. The right-hand side is estimated as 
\[
2\Re \left[ 1 - \left(1-\frac{1}{1/2-i\gamma} \right)^n \right] 
=
2\Re \left[ \sum_{k=1}^{n} \binom{n}{k} \frac{(-1)^{k+1}}{(1/2-i\gamma)^k} \right] 
\ll \frac{1}{1/4+\gamma^2}
\]
with an implied constant depending only on $n$. 
Therefore, 
\[
\sum_{\gamma \in \Gamma} m_\gamma 
\left\vert\, 1 - \left(1-\frac{1}{1/2-i\gamma} \right)^n \,\right\vert^2 
\ll \sum_{\gamma \in \Gamma} \frac{m_\gamma}{\gamma^2} < \infty .
\]

Applying Proposition \ref{prop_203} 
to calculate the norm of $G_n(z)$ 
by \eqref{eq0225_1}  
and using \eqref{eq_0503_1}, 
\begin{equation} \label{s301}
\aligned 
\Vert G_n \Vert^2
& = \pi \sum_{\gamma \in \Gamma} 
m_\gamma 
\left\{
\left[\, 1 - \left(1-\frac{1}{1/2-i\gamma} \right)^n \,\right] 
+
\left[\, 1 - \left(1-\frac{1}{1/2+i\gamma} \right)^n \,\right] 
\right\},
\endaligned 
\end{equation}
where the right-hand side converges absolutely under the curly brackets. 

On the other hand, by \eqref{s101} and the convention for the sum $\sum_\rho$ 
in the introduction, 
\begin{equation*} 
\aligned 
\lambda_n 
& = \lim_{T \to \infty} \sum_{{\gamma \in \Gamma}\atop{|\gamma| \leq T}} 
m_\gamma \, 
\left[\, 1 - \left(1-\frac{1}{1/2-i\gamma} \right)^n \,\right] \\
& = 
\lim_{T \to \infty} \frac{1}{2} \sum_{{\gamma \in \Gamma}\atop{|\gamma| \leq T}} 
m_\gamma \, \left\{
\left[\, 1 - \left(1-\frac{1}{1/2-i\gamma} \right)^n \,\right] 
+
\left[\, 1 - \left(1-\frac{1}{1/2+i\gamma} \right)^n \,\right] 
\right\}, 
\endaligned 
\end{equation*}
since $\Gamma$ is closed under $\gamma \mapsto -\gamma$. 
As we have already seen, the right-hand side is absolutely convergent under the curly brackets. 
Therefore, 
\begin{equation} \label{s302} 
\lambda_n 
= 
\frac{1}{2} \sum_{\gamma \in \Gamma} 
m_\gamma \, \left\{
\left[\, 1 - \left(1-\frac{1}{1/2-i\gamma} \right)^n \,\right] 
+
\left[\, 1 - \left(1-\frac{1}{1/2+i\gamma} \right)^n \,\right] 
\right\}.
\end{equation} 
Hence equality \eqref{s105} follows from \eqref{s301} and \eqref{s302}. 
\hfill $\Box$

\section{Concluding remarks} \label{section_5}

\subsection{On a relation between $\lambda_n$ and $G_n$} \label{section_5_1}

It is an interesting question what can be said about the relation 
between Li coefficients $\lambda_n$ and functions $G_n$ 
either unconditionally or when the Riemann hypothesis is false. 
However, it is difficult to say anything concrete 
about the relation between $\lambda_n$ and $G_n$ in such a situation. 
For example, equation 
\begin{equation} \label{eq0301_5}
\lambda_1 = \frac{1}{2\pi} \Vert G_1 \Vert_{L^2(\R)}^2, \quad G_1(z)=H_1(1/2-iz)
\end{equation}
for $n=1$ is a necessary condition for the truth of the Riemann hypothesis. 
In this case, $\lambda_1$ and $H_1(s)$ can be written down in simple form 
as \eqref{eq0301_4} and \eqref{eq0301_3}, respectively, 
so \eqref{eq0301_5} can in principle be confirmed with arbitrary precision 
by numerical calculations. 
However, the author could not find a way to theoretically confirm \eqref{eq0301_5}. 

The main obstacle is that integrals 
\begin{equation} \label{eq_01}
\aligned 
\int_{\R} & F_{\mu_1}(z)  \overline{F_{\mu_2}(z)} \, dz \\
& =
\int_{\Re(s)=1/2}
\frac{\xi(s)}{(s-\rho_1)(\xi(s)+\xi'(s))}\cdot 
\frac{\xi(1-s)}{(1-s-\overline{\rho_2})(\xi(s)-\xi'(s))} \, ds
\endaligned 
\end{equation}
for zeros $\rho_j$ of $\xi(s)$ ($j=1,2$) 
are difficult to handle 
without using the general theory of model spaces  
which can be applied under the Riemann hypothesis, 
where $\mu_j:=i(\rho_j-1/2)$ is generally not real without the Riemann hypothesis. 
For instance, when trying to calculate these integrals 
by moving the path of integration and residue calculus, 
it becomes necessary to deal with the zeros of $\xi(s)+\xi'(s)$ 
whose relation to the zeros of $\xi(s)$ is unclear. 
Furthermore, since $\xi(s)/(s-\rho_j)$ are entire functions, 
it is unclear how to extract information about the zeros of $\xi(s)$ 
in computing integrals \eqref{eq_01}.
For these two reasons, 
it is considered to be difficult to derive some results 
on the norm of $G_n(z)$ unconditionally or 
under the assumption that the Riemann hypothesis is false, 
and new and fundamental ideas are needed.
\medskip

In \cite{Bo01}, Bombieri studies 
what happens to Weil's quadratic form $W(f(x)\ast\overline{x^{-1}g(x^{-1})})$ 
if the Riemann hypothesis is false. 
By the relation among $g_n$, $G_n$, and $W$ in the next part, 
Bombieri's results may reveal the relation between $\lambda_n$ and $\Vert G_n \Vert_{L^2(\R)}^2$ 
(such as inequality) if the Riemann hypothesis is false, 
but unfortunately, there is nothing be understood at present. 
\medskip

On the other hand, equality \eqref{s105} in Theorem \ref{thm_1} provides an inefficient method 
to disprove the Riemann hypothesis. 
Since both $\lambda_n$ and $\Vert G_n \Vert_{L^2(\R)}^2$ 
are calculable in principle to arbitrary precision, 
it could be verified by a finite computation if the equality does not hold. 
(This way was noted by the referee.)
\medskip

\subsection{On an explicit relation between $g_n$ and $G_n$.} \label{section_5_2}

As we have already seen, the model space $\mathcal{K}(\Theta)$ is defined 
for $\Theta$ in \eqref{s202} assuming the Riemann hypothesis holds. 
Then, using the orthonormal basis in Proposition \ref{prop_203}, 
the transformation $\mathcal{T}$ from the space of smooth and compactly supported functions 
on $\R_{>0}$ 
to  $\mathcal{K}(\Theta)$ is defined by 
\[
\mathcal{T}(f)(z):= \sum_{\rho=1/2-i \gamma \in \mathcal{Z}} \sqrt{\pi m_\rho}\, \widehat{f}(\rho) F_\gamma(z) ,
\]
where $\widehat{f}(s):=\int_{0}^{\infty}f(x)x^{s-1}dx$ is the Mellin transform. 
Further, definition \eqref{eq0301_2} and Proposition \ref{prop_203} give 
\[
W(f(x) \ast \overline{x^{-1}f(x^{-1})}) = \frac{1}{\pi} \left\Vert \mathcal{T}(f) \right\Vert_{L^2(\R)}^2. 
\]
This transformation extends to a wider class of functions 
defined on $\R_{>0}$, but we omit it  
because it is the subject of \cite{Su23} and not of this paper. 
However, it follows directly from \eqref{eq0225_1} and 
\[
\widehat{g_n}(\rho) = 1 - \left( 1 -\frac{1}{\rho} \right)^n
\] 
that the transformation $\mathcal{T}$ is applicable to $g_n$ of \eqref{eq0225_2} 
and $G_n = \mathcal{T}(g_n)$ holds. 

\bigskip

\noindent
{\bf Acknowledgment} 
\medskip

\noindent 
The author appreciate the referee's careful reading and valuable suggestions and comments 
that resulted in large improvements in presentation of the paper and explanation for contents.  
This work was supported by JSPS KAKENHI Grant Number JP17K05163 and JP23K03050. 

%

%
\end{document}